    \declaretheorem[style = plain]{theorem}   
    \declaretheorem[style = plain,      sibling = theorem]{corollary,lemma,proposition}
    \declaretheorem[style = definition, sibling = theorem]{definition, example}
    \declaretheorem[style = remark]{remark,problem,conjecture,question}
\DeclareMathOperator{\dist}{dist}
\newcommand{\C}{\mathbb C}
\newcommand{\compact}{\mathcal K}
\newcommand{\compose}{\circ}
\newcommand{\Id}{\operatorname{Id}}
\newcommand{\inv}{^{-1}}
\newcommand{\isom}{\cong}
\newcommand{\M}[1]{M_{#1}} 
\renewcommand{\P}{\mathbb{P}}
\newcommand{\CP}[1]{\ensuremath{\C\P^{#1}}}
\newcommand{\tensor}{\otimes} 
\newcommand{\U}{\mathfrak{U}} 
\newcommand{\UHF}{\operatorname{UHF}_\infty}
\newcommand{\Z}{\ensuremath{\mathcal Z}}
\newcommand{\zahlen}{\ensuremath{\mathbb Z}}
\newcommand{\vcm}{ Villadsen connecting map } 
 \newcommand{\vas}{Villadsen algebras }
\title{Villadsen Idempotents}
\author{Cristian Ivanescu} \address{Grant MacEwan University}
\author{Dan Kucerovsky} \address{Univ. of New Brunswick at Fredericton}
\begin{document}\keywords{UCT conjecture, C*-algebras, tensor products, Grothendieck rings}
\subjclass[2010]{Primary 46L05, secondary 46L35 }
\begin{abstract}
C*-algebras are rings, sometimes nonunital, obeying certain axioms that ensure a very well-behaved representation theory upon Hilbert space.   Moreover, there are some well-known features of the representation theory leading to subtle questions about norms on tensor products of C*-algebras, and thus to the subclass of nuclear C*-algebras. The question whether all separable nuclear C*-algebras satisfy the Universal Coefficient Theorem (UCT) remains one of the most important open problems in the structure and classification theory of such algebras.
One of the most promising ways to test the UCT conjecture depends on finding C*-algebras that behave as idempotents under the tensor product, and satisfy certain additional properties.
Briefly put, 
if there exists a simple, separable, and nuclear C*-algebra that is an idempotent under the tensor product, satisfies a certain technical property, and is not one of the already known such elements $\left\{ O_\infty, O_2, \UHF , J, Z, \C, \compact \right\}$ then the UCT fails. Although we do not disprove the UCT in this publication, we do find new idempotents in the class of Villadsen algebras.
\end{abstract}\maketitle

\maketitle

\section{Idempotents and their relation to the classification of C*-algebras} 

It is known that in many cases, C*-algebras can be classified by invariants coming from K-theory. Thus it is of interest to compute K-theory groups of C*-algebras, and the main computational tool for K-theoretical questions is possibly the so-called UCT theorem due to Rosenberg and Schochet \cite{RS1987}. It has been conjectured that the UCT may hold for all nuclear separable C*-algebras, but on the other hand, it has been pointed out that this conjecture could be disproven by finding a new example of a tensor idempotent having the strongly self-absorbing property.  A natural place to look for such an idempotent is in the famous class of algebras studied by Villadsen, because after all, this class has been  the main source of counterexamples to many questions about C*-algebras. However, it has also been conjectured that the Villadsen algebras might be classifiable by using sufficiently many invariants. We remain agnostic with respect to these conjectures, but it is generally recognized that the classification program makes key use of tensor idempotents with the strongly self-absorbing property. Thus, whether one hopes to disprove the UCT conjecture, or to classify the Villadsen algebras, it seems very relevant to first investigate the idempotents that may exist in the Villadsen class.

\begin{definition}A unital C*-algebra $\mathcal{A}$ is  \emph{strongly self-absorbing} if there is an isomorphism $\phi: \mathcal{A} \rightarrow \mathcal{A} \otimes \mathcal{A}$ so that $\phi$ is approximately unitarily equivalent to the *-homomorphism $a \rightarrow a \otimes 1_{\mathcal{A}}$.\end{definition}

All strongly self-absorbing C*-algebras must be simple, nuclear, and $\mathcal{Z}$-stable. The following is a list of the known strongly self-absorbing C*-algebras: the Jiang-Su algebra $\mathcal{Z}$, UHF algebras of infinite type, the Cuntz algebra $\mathcal{O}_{\infty}$, tensor products of UHF algebras of infinite type, $\mathcal{O}_{\infty}$, and $\mathcal{O}_{2}$. If all nuclear C*-algebras satisfy the UCT, this is a complete list of strongly self-absorbing C*-algebras.
It is a fact that the strongly self-absorbing property implies \Z-stability, and that most Villadsen algebras are not \Z-stable. We therefore suggest some possible replacements for the strongly self-absorbing property, and we find idempotents having these properties.
The Villadsen algebras are conjectured classifiable by sufficiently many invariants, however there is no known  corresponding idempotent.
We construct a Villadsen idempotent in the following.

\section{Villadsen's algebras of the first type}
We start by recalling the class of inductive limits introduced by Villadsen \cite[pgs. 110-111]{Villadsen1998}, generalized slightly as in \cite{TW2009}.

\begin{definition}\label{vmap}
Let $X$ be a compact Hausdorff space and $n,m,k \in \mathbb{N}$.  Consider a unital diagonal $*$-homomorphism
\[
\phi:  {M}_n \otimes  {C}(X) \to  {M}_k \otimes  {C}(X^{\times m})
\]
where the entries on the diagonal are either of the form $f\mapsto f(y)$ for a point $y\in X$ \emph{or}
are of the form $f\mapsto f\compose\pi_i$ where $\pi_i$ is one of the canonical projection maps $\pi_i\colon X^{\times m}\to X.$
This map will be called a \emph{Villadsen connecting map}, or simply a connecting map if no ambiguity can arise.
\end{definition}

\begin{definition}\label{villadsen} Let $X$ be a compact Hausdorff space, and let $(n_i)_{i=1}^{\infty}$ and $(m_i)_{i=1}^{\infty}$ be sequences
of natural numbers with $n_{1}=1$.  Fix a compact Hausdorff space $X$, and put $X_i = X^{\times n_i}$.
A unital $C^{*}$-algebra $A$ is a Villadsen algebra  if it can be written as an inductive limit algebra
\[
A \cong \lim_{i \to \infty}\left(  {M}_{m_i} \otimes  {C}(X^{\times n_i}), \phi_i \right)
\]
where each $\phi_i$ is a Villadsen connecting  map.
\end{definition}
Recall that in \cref{vmap}, we are allowed to choose evaluation points $y_i \in X.$ Thus, with each Villadsen algebra, there is associated a sequence $y_i$ of evaluation points, $y_i\in X.$
Villadsen algebras as defined above need not be simple as C*-algebras, but as pointed out by Villadsen [ibid.], if the sequence of evaluation points $y_i$ happens to have dense range in $X,$ then the inductive limit algebra can be shown to  be simple. In this case, the algebra obtained is actually independent of the choice of evaluation points, provided that they are dense (See \cite{ELN}.)

The Hilbert cube $\U$ is the universal space in the class of metric spaces with a countable base.  Define a topological manifold to be a separable metric space which has an open cover by open subsets of $[0,1]^n$ for some $n=1,2,3,\cdots,\infty.$ When  $n=\infty,$ the term Hilbert cube manifold is also used.
Call a space $X$ \emph{$n$-homogeneous} if for every natural number $n$ and any two sets $F,G\subset X$ with cardinality $|F|=|G|=n,$  there is a homeomorphism $h:X\to X$ such that $h(\{F]\})=\{G\}.$ Also, if $h$ can be chosen to extend any previously given bijection between $F$ and $G,$ then $X$ is called \emph{strongly $n$-homogeneous}.
A space $U$ is said to be \emph{$\omega$-homogeneous} if given $(a_i)$ and $(b_i)$ that are countable dense in $U,$ there is a homeomorphism $h$ of $U$ onto $U$ preserving the range of these sequences, so that  $h(\{a_i\})=\{b_i\}.$ Cantor showed that the real line is $\omega$-homogeneous, then Fort showed that the Hilbert cube is $\omega$-homogeneous, and then Ungar generalized to the (possibly infinite-dimensional) locally Euclidean case.
The following summarizes some known results from their papers \cite{Fort1962,Bennett1972,Unger78}.
\begin{theorem}[Bennett-Fort-Unger] The Hilbert cube, and more generally, all connected $n$-dimensional topological manifolds with a countable base are $\omega$-homogeneous. This is equivalent to strong $M$-homogenity for \emph{all} finite   $M.$
\label{thm:CDH} \end{theorem}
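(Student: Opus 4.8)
The plan is to route everything through one geometric input and one analytic engine. The geometric input is \emph{strong local homogeneity}: every point of $X$ has arbitrarily small connected open neighborhoods $U$ with the property that, for any finite collection of distinct points and prescribed distinct targets all lying in $U$, there is a homeomorphism of $X$ equal to the identity off $U$ that carries each point to its target. For a connected manifold of dimension $\ge 2$ this is a consequence of the isotopy extension theorem together with the path-connectedness of the ordered configuration space of $r$ points: removing finitely many points from a connected manifold of dimension $\ge 2$ leaves it connected, so the points can be slid to the targets along disjoint paths inside $U$, and the resulting ambient isotopy is extended to a compactly supported homeomorphism. For the Hilbert cube $\U$ the corresponding movability of finite tuples is exactly the transitivity result of Fort \cite{Fort1962}. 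From strong local homogeneity I would first deduce \emph{strong $M$-homogeneity} for every finite $M$: given $F=\{f_1,\dots,f_M\}$, $G=\{g_1,\dots,g_M\}$ and a bijection $\sigma$, connectedness (and $\dim\ge 2$, or the infinite-dimensionality of $\U$) lets me join each $f_j$ to $\sigma(f_j)$ along disjoint paths, cover each path by a finite chain of the small neighborhoods above, and compose the corresponding homeomorphisms to transport $F$ onto $G$ realizing $\sigma$ exactly.

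The crux is upgrading strong finite homogeneity to $\omega$-homogeneity by a back-and-forth construction performed inside the group $\mathrm{Homeo}(X)$ equipped with the metric $\rho(f,g)=\sup_x d(f(x),g(x))+\sup_x d(f\inv(x),g\inv(x))$, under which $\mathrm{Homeo}(X)$ is complete because $X$ is compact or, for manifolds, Polish and locally compact. Given countable dense sequences $(a_i)$ and $(b_i)$, I would build homeomorphisms $h_0,h_1,\dots$ so that at even stages a new $a_i$ is sent into the range of $(b_j)$ and at odd stages $h\inv$ sends a new $b_i$ into the range of $(a_j)$, each $h_{k+1}$ obtained from $h_k$ by composing with a local move supported in a ball small enough to fix every point matched so far. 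The main obstacle is convergence: the ball radii must shrink fast enough that $(h_k)$ is $\rho$-Cauchy, which requires simultaneous control of the maps and their inverses, and the alternating bookkeeping must be arranged so that in the limit $h(\{a_i\})=\{b_i\}$ holds as an equality of sets rather than a mere inclusion. Completeness of $\mathrm{Homeo}(X)$ then produces the limit as a genuine homeomorphism; I expect this radius control and the guarantee that previously matched points are never displaced to be the delicate part, exactly as in Fort's treatment of $\U$ and its extension to the locally Euclidean case by Unger \cite{Unger78}.

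Finally, for the asserted equivalence, the preceding two steps already give the direction from strong $M$-homogeneity (for all finite $M$) to $\omega$-homogeneity. For the reverse direction I would extract the needed local homogeneity from the countable-dense-set transitivity: applying $\omega$-homogeneity to dense sequences whose initial segments encode a prescribed finite bijection, and then using the completeness of $\mathrm{Homeo}(X)$ to localize the resulting set-matching, reduces the construction of a bijection-extending homeomorphism to the same back-and-forth scheme, which is precisely the equivalence recorded by Bennett \cite{Bennett1972}. I therefore expect the genuinely new labor to sit entirely in the back-and-forth convergence argument of the second step, with the two homogeneity notions becoming interchangeable once that is in hand.
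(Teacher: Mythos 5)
The paper itself contains no proof of this theorem --- it is stated as a summary of results cited from Fort, Bennett, and Unger --- so your proposal can only be measured against the literature it reconstructs. Your forward route is faithful to that literature: strong local homogeneity, realized for manifolds of dimension $\ge 2$ by sliding points along disjoint paths and extending the isotopy, and for $\U$ by Fort's theorem; then the back-and-forth performed inside $\mathrm{Homeo}(X)$ with the two-sided metric $\rho(f,g)=\sup d(f(x),g(x))+\sup d(f\inv(x),g\inv(x))$, with moves supported in shrinking balls that fix previously matched points. This is precisely Bennett's proof that a connected, strongly locally homogeneous Polish space is $\omega$-homogeneous, and your identification of the delicate points (simultaneous Cauchy control of $h_k$ and $h_k\inv$, and set equality rather than inclusion in the limit) is accurate.

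The genuine gap is the converse direction in your last paragraph. A homeomorphism $h$ with $h(\{a_i\})=\{b_i\}$ gives \emph{no control whatever} over which $a_i$ lands on which $b_j$, so ``dense sequences whose initial segments encode a prescribed finite bijection'' cannot force $h$ to realize that bijection, and completeness of $\mathrm{Homeo}(X)$ does not ``localize the set-matching'' --- nothing in the $\omega$-homogeneity hypothesis acts locally. Extracting strong $M$-homogeneity from $\omega$-homogeneity is exactly the hard content of Unger's paper, whose argument runs through Effros's micro-transitivity theorem for Polish group actions and, crucially, the hypothesis that no finite subset separates $X$. That hypothesis is not cosmetic: $\R$ is $\omega$-homogeneous (Cantor), yet not strongly $3$-homogeneous, since every self-homeomorphism of $\R$ is monotone and cannot transpose two points while fixing a third; likewise $S^1$ is $\omega$-homogeneous but not strongly $4$-homogeneous. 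So the equivalence as stated in the theorem fails for $1$-manifolds, your own step-two disjoint-paths argument already assumes $\dim\ge 2$ (in silent conflict with the theorem's ``all connected $n$-dimensional manifolds''), and it further assumes empty boundary --- note that $[0,1]$ and the closed $2$-disk fall under the paper's definition of manifold but are not $\omega$-homogeneous, since a homeomorphism must preserve the boundary. No refinement of the back-and-forth bookkeeping can close this: the converse needs the Effros machinery and the finite-nonseparation hypothesis, and the statement itself needs those caveats before it can be proved at all.
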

It seems interesting to see how the $\omega$-homogenity of the Hilbert cube manifests in the Villadsen construction;  it  shows  the independence of the choice of (dense) set of evaluation points, and more importantly, it allows us to construct a Villadsen algebra that is an idempotent in the sense discussed previously.

\subsection{The basic lemma}

Let us first consider how base space homeomorphisms interact with the Villadsen connecting maps in \cref{vmap}.
\begin{lemma}\label{conjugation.lemma}
Suppose that \(
\phi:  {M}_n \otimes  {C}(X) \to  {M}_k \otimes  {C}(X^{\times m})
\) is a Villadsen connecting map with evaluation points $(y_i).$ Let $h\colon X\to X$ be a given self-homeomorphism of $X.$ Let us denote the pullback action of $h$ on  $ {M}_n \otimes  {C}(X)$ by $\hat h.$ Then
$$   \left({\underset m \oplus  \hat h\inv}\right) \compose \phi \compose \hat h $$
is a  Villadsen connecting map with evaluation points $(h(y_i)).$
\end{lemma}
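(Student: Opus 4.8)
The plan is to reduce the claim to an entrywise computation, exploiting that pullback by a homeomorphism is a unital $*$-isomorphism and that each of the two permitted diagonal-entry types from \cref{vmap} is carried to an entry of the same type, with the evaluation points merely relabelled by $h$.

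First I would fix notation. Identifying $M_n \otimes C(X) = M_n(C(X))$, I record that $\hat h$ is the entrywise pullback $f \mapsto f \circ h$, a unital $*$-automorphism of $M_n(C(X))$ whose inverse is pullback by $h\inv$. Writing $H := h^{\times m}$ for the product homeomorphism of $X^{\times m}$, I would read $\bigoplus_m \hat h\inv$ as the induced unital $*$-automorphism $g \mapsto g \circ H\inv$ of $M_k(C(X^{\times m}))$, since $(h^{\times m})\inv = (h\inv)^{\times m}$. Then $\Phi := (\bigoplus_m \hat h\inv) \circ \phi \circ \hat h$ is a composite of unital $*$-homomorphisms between the correct algebras, the two outer maps being isomorphisms, hence a unital $*$-homomorphism $M_n(C(X)) \to M_k(C(X^{\times m}))$ of the same matrix sizes as $\phi$. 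So everything reduces to checking that $\Phi$ is again diagonal with entries of the two permitted forms.

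Next I would write $\phi$ in its diagonal form $\phi(a)(x_1,\dots,x_m) = \operatorname{diag}_j a(\xi_j(x_1,\dots,x_m))$, where each slot map $\xi_j : X^{\times m} \to X$ is either constant with value $y_j$ (the evaluation entries) or a coordinate projection $\pi_i$ (the projection entries); a fixed conjugating unitary, if one is present, is immaterial because $\bigoplus_m \hat h\inv$ is multiplicative and carries it to another unitary, so it passes through both the conjugation and the block-diagonal. It therefore suffices to track a single slot. Unwinding the three maps gives $\Phi(a)(x) = \operatorname{diag}_j a\big(h(\xi_j(H\inv(x)))\big)$, so the effective slot map is $h \circ \xi_j \circ H\inv$.

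Finally I would evaluate $h \circ \xi_j \circ H\inv$ in the two cases. If $\xi_j \equiv y_j$, the composite is the constant $h(y_j)$, i.e.\ an evaluation entry at the new point $h(y_j)$; running over all evaluation entries sends $(y_i)$ to $(h(y_i))$. If $\xi_j = \pi_i$, then $\pi_i \circ H\inv = h\inv \circ \pi_i$ as maps $X^{\times m} \to X$, so $h \circ \pi_i \circ H\inv = \pi_i$ by $h \circ h\inv = \operatorname{id}_X$, and the projection entries are unchanged. Thus every diagonal entry of $\Phi$ is again of a permitted form, $\Phi$ is a \vcm, and its evaluation points are exactly $(h(y_i))$. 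The only point needing care is the correct reading of the target action $\bigoplus_m \hat h\inv$ as pullback by the product homeomorphism $H\inv$; granting that, the cancellation $h \circ h\inv = \operatorname{id}$ on the projection slots is the entire content, and no analytic estimates are required.
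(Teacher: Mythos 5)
Your proof is correct and follows essentially the same route as the paper's: a two-case check of the diagonal entries, where evaluation entries become constant (so the outer pullback acts trivially, relabelling $y_i$ to $h(y_i)$) and projection entries are fixed by the cancellation $h \circ h\inv = \operatorname{id}$. Your version is merely more explicit than the paper's, writing out the slot maps and correctly reading $\bigoplus_m \hat h\inv$ as pullback by the product homeomorphism $(h\inv)^{\times m}$, which is a fair formalization of the paper's verbal argument.
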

\begin{proof} Recall that this diagonal connecting map has two types of entries on the diagonal: they may be either  $f\mapsto f(y_i )$ for a given evaluation point $y_i\in X$ or
 $f\mapsto f\compose\pi_i$ where $\pi_i$ is one of the canonical projection maps $\pi_i\colon X^{\times m}\to X.$ Let us consider the two cases separately. In the first case, we first pull back $f$ by the given homeomorphism $h$, and evaluate at $y_i$, giving  $f(h(y_i)).$ Since this has already been evaluated, when we subsequently apply the pullback action of
 $\hat h\inv,$ nothing further happens. In other words, $\hat h\inv$ acts trivially on  constant functions in $  {M}_n \otimes  {C}(X).$

 In the second case, we first pull back $f$ by the given homeomorphism $h$, and then subsequently apply the pullback action of $h\inv,$ and that gives us $f$ back again. This completes the proof that the map obtained is a \vcm, and the first paragraph already showed that the evaluation points are of the form $(h(y_i)),$ as was to be shown.
\end{proof}

We can use the basic lemma to show the algebra obtained in the limit is independent of the choice of basepoint  (this is already known  \cite{ELN}.)

\begin{corollary} Let $X$ denote a separable compact topological manifold. \vas $V_1$ and $V_2$  with the same initial space $X,$  differing only in their choice of evaluation points, are  isomorphic provided that the two sets of evaluation points are both dense.
\end{corollary}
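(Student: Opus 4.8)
The plan is to realize the isomorphism from a single self-homeomorphism of the base space, using \cref{thm:CDH} to produce the homeomorphism and \cref{conjugation.lemma} to transport it to the level of connecting maps. Write $V_1=\lim(A_i,\phi_i)$ and $V_2=\lim(A_i,\psi_i)$, where $A_i=\M{m_i}\tensor C(X^{\times n_i})$ and the maps $\phi_i,\psi_i$ share the same diagonal combinatorial data and differ only in which points of $X$ are used for the point-evaluation entries. I would group the two dense evaluation sequences into the finite sets $Y_i\subset X$ and $Z_i\subset X$ actually used at stage $i$.

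First I would invoke \cref{thm:CDH}. Since $X$ is a connected separable compact manifold (the Hilbert cube in particular), $X$ is $\omega$-homogeneous, equivalently strongly $M$-homogeneous for every finite $M$, and from this I extract a self-homeomorphism $h\colon X\to X$ carrying the evaluation data of $V_1$ onto that of $V_2$. The exact property I need is the stagewise set equality $h(Y_i)=Z_i$ for every $i$; I do \emph{not} need a prescribed slot-by-slot bijection, because any permutation of the finitely many diagonal entries within one stage is implemented by a constant permutation unitary, and two inductive systems whose connecting maps agree up to such unitary conjugation have isomorphic limits.

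Next I would apply \cref{conjugation.lemma}, in the routine extension where the domain is taken over a power $X^{\times n_i}$ and the conjugating homeomorphism is $h^{\times n_i}$ acting coordinatewise; the argument is verbatim, coordinate-projection entries being handled componentwise and evaluation entries exactly as in the lemma. Let $\gamma_i$ denote the pullback of $(h\inv)^{\times n_i}$ tensored with $\Id_{\M{m_i}}$, a $*$-automorphism of $A_i$. The lemma then yields $\psi_i=\gamma_{i+1}\compose\phi_i\compose\gamma_i\inv$ up to the within-stage permutation noted above, i.e. the squares $\gamma_{i+1}\compose\phi_i=\psi_i\compose\gamma_i$ commute. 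Consistency of the ladder is automatic: the codomain conjugation that \cref{conjugation.lemma} prescribes for $\phi_i$ is the pullback of $(h\inv)^{\times n_{i+1}}$, which is exactly the map $\gamma_{i+1}$ serving as the domain conjugation for $\phi_{i+1}$, so the single family $(\gamma_i)$ fits every rung simultaneously. Passing to the inductive limit then gives $V_1\cong V_2$.

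The main obstacle is the stagewise matching $h(Y_i)=Z_i$. Bare $\omega$-homogeneity supplies only the global set equality $h(\bigcup_i Y_i)=\bigcup_i Z_i$ and need not respect the partition into stages; indeed, already on a compact $1$-manifold the cyclic-order constraint shows that no single homeomorphism can respect an arbitrary stagewise partition. I would therefore not rely on a single $h$ in full generality, but instead run an Elliott-type approximate intertwining: at the $i$-th step, strong finite homogeneity freely matches the finitely many points accumulated through stage $i$, producing pullbacks that intertwine the two systems up to that stage, and a diagonal argument recovers the full isomorphism in the limit, the density of $\bigcup_i Y_i$ and $\bigcup_i Z_i$ ensuring that the residual discrepancies at higher stages wash out. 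This delicate matching is precisely the content of the known result \cite{ELN}; when $X$ is the Hilbert cube or a higher-dimensional manifold, where arbitrary bijections between countable dense sets are realizable by homeomorphisms, the single-homeomorphism argument of the preceding paragraphs already suffices.
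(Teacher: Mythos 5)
Your fallback argument is essentially the paper's own proof: the paper likewise makes no attempt at a single global homeomorphism, but uses the strong finite homogeneity of \cref{thm:CDH} to produce a sequence of homeomorphisms $h_i$ matching the finitely many evaluation points in play at each stage, and then applies \cref{conjugation.lemma} to obtain induced automorphisms (composed progressively, $\hat h_1\compose\hat h_2\compose\cdots\compose\hat h_i$) whose amplifications intertwine the two inductive systems, with density of the evaluation points entering only in the limit --- exactly your stagewise Elliott-type intertwining, and your observations about amplifying $h$ coordinatewise to $X^{\times n_i}$ and absorbing within-stage permutations into a permutation unitary are correct refinements of what the paper leaves implicit. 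One caveat: your closing claim that for the Hilbert cube ``arbitrary bijections between countable dense sets are realizable by homeomorphisms'' is false (continuity forces convergent sequences in one dense set to map to convergent sequences, which an arbitrary bijection violates; $\omega$-homogeneity only supplies \emph{some} bijection realized by a homeomorphism), so the single-homeomorphism shortcut should not be invoked even for $\U$ --- but since your stagewise intertwining covers that case in any event, the proof stands.
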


\begin{proof} According to \cref{villadsen}, we are thus given two countable inductive limits with the same objects:
\begin{center}
\begin{tikzcd}
 A_1 \arrow{r}{\phi_1}
& A_2 \arrow{r}{\phi_2} & A_3 \arrow{r} &  \mbox{\qquad with} \lim_{\rightarrow} A_i=:V_1 \\
A_1  \arrow{r}{\psi_1}
& A_2  \arrow{r}{\psi_2}  & A_3 \arrow{r} & \mbox{\qquad with} \lim_{\rightarrow} A_i =:V_2 .\\
\end{tikzcd}
\end{center}
The difference between these limits is that there may be different evaluation points $x_i$ and $x'_i$ used in the two cases.  Given that the cardinalities of the sets $\{x_i\}$ and $\{x'_i\}$  are equal, and observing that the cardinality is at most $\aleph_0,$ we use the strong $n$-homogenity of \cref{thm:CDH}
to find a sequence of homeomorphisms,   $h_i: X \rightarrow X$ that intertwine the evaluation points of $\phi_i$ and $\psi_i.$ By \cref{conjugation.lemma}, these homeomorphisms $h_i$ will conjugate $\phi_i$ and $\psi_i,$ up to an amplification. In other words, they induce  automorphisms  $\hat h_i : M_n(C(X))\rightarrow M_n(C(X))$ whose amplifications intertwine the above two inductive limits:

\begin{center}
\begin{tikzcd}
 A_1 \arrow{r}{\phi_1} \arrow{d}{\hat h_1} & A_2 \arrow{r}{\phi_2}\arrow{d}{ \hat h_1\circ \hat h_2 }  & A_3 \arrow{r}\arrow{d}{\hat h_1\circ \hat h_2 \circ \hat h_3}  &  \cdots  \\
A_1  \arrow{r}{\psi_1}                                & A_2  \arrow{r}{\psi_2}                                                   & A_3 \arrow{r}                                                                       & \cdots  \\
\end{tikzcd}
\end{center}

on larger and larger finite sets (of points), becoming dense in the limit.\end{proof}
In the case of simple Villadsen algebras with the Hilbert cube as base space, all of the base spaces are homeomorphic to the Hilbert cube, and self--homeomorphisms of the Hilbert cube are equivalent to different choices of (dense) evaluation points, but as we have seen, the algebra obtained does not change. Generally, homeomorphisms of topological manifolds need not be connected to the identity homeomorphism, but the  Hilbert cube is contractible. So in the special case of the Hilbert cube, the homeomorphisms constructed are connected to the identity, and therefore the induced automorphisms are also connected to the identity. Therefore we can conclude  that \begin{corollary}Self-homeomorphisms of the Hilbert cube induce  automorphisms (connected to the identity) of simple Hilbert cube Villadsen algebras. \label{cor:can.homotope.basepoints}\end{corollary}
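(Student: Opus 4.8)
The plan is to first produce the automorphism and then to show that it lies in the identity component of $\operatorname{Aut}(A)$. Fix a simple Hilbert cube \va{} $A\isom\lim_i\left(\M{m_i}\tensor C(\U^{\times n_i}),\phi_i\right)$ with evaluation points $(y_i)$, which are dense because $A$ is simple, and let $h\colon\U\to\U$ be a self-homeomorphism. At each stage the pullbacks of the powers $h^{\times n_i}$ are automorphisms of $\M{m_i}\tensor C(\U^{\times n_i})$, and by \cref{conjugation.lemma} they carry each $\phi_i$ to a connecting map whose evaluation points are $(h(y_i))$. Thus these pullbacks intertwine the two inductive systems and induce an isomorphism from $A$ onto the \va{} $A'$ with evaluation points $(h(y_i))$. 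Since $h$ is a homeomorphism, the points $(h(y_i))$ are again dense, so $A'$ is simple and the preceding corollary produces an isomorphism $A'\isom A$; composing the two yields the desired automorphism $\alpha_h\colon A\to A$.

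To see that $\alpha_h$ is connected to the identity I would realize it as the endpoint of a continuous path of automorphisms induced by a path of base-space homeomorphisms. Concretely, I would produce an isotopy $(h_t)_{t\in[0,1]}$ with $h_0=\Id_\U$ and $h_1=h$, and then let $t\mapsto\alpha_{h_t}$ be the corresponding path, which runs from $\Id_A$ to $\alpha_h$. The existence of such an isotopy is exactly where the geometry of the Hilbert cube is decisive: a finite-dimensional manifold can carry discrete isotopy invariants, such as orientation, that obstruct a homeomorphism from being joined to the identity, whereas $\U$ carries none and its homeomorphism group is connected. To keep this step concrete I would reduce to the strongly $M$-homogeneous homeomorphisms supplied by \cref{thm:CDH}, which move only a finite tuple of points; any two such tuples lie in a common finite-dimensional face $[0,1]^N\subset\U$, the ordered configuration space of $[0,1]^N$ is path-connected for $N\ge 2$, and a path in it promotes, via the isotopy extension theorem together with the product decomposition $\U\isom[0,1]^N\times\U$, to an ambient isotopy of $\U$ starting at $\Id_\U$.

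It then remains to check that $t\mapsto\alpha_{h_t}$ is continuous for the point-norm topology on $\operatorname{Aut}(A)$, and here a routine approximation suffices: on a fixed finite stage the assignment $h_t\mapsto f\compose h_t^{\times n_i}$ is continuous because $f$ is uniformly continuous on the compact space $\U^{\times n_i}$ while $h_t$ varies uniformly, and an arbitrary element of $A$ is a norm-limit of such $f$. I expect the real obstacle to be the construction of the isotopy rather than this continuity check: contractibility of $\U$ only gives a homotopy of $h$ to a constant map through arbitrary continuous maps, which does not by itself yield an isotopy through homeomorphisms. The substantive point is therefore the connectedness of $\mathrm{Homeo}(\U)$, and the reduction to point-pushing inside finite-dimensional faces described above is the mechanism I would use to make that step elementary and self-contained.
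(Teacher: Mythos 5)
Your first paragraph is exactly the paper's route: the paper compresses the same argument into the remark that self-homeomorphisms of the Hilbert cube ``are equivalent to different choices of (dense) evaluation points,'' i.e.\ \cref{conjugation.lemma} gives an exact intertwining onto the system with evaluation points $(h(y_i))$, and the basepoint-independence corollary closes the loop. You are also right to be suspicious of the connectivity step: the paper justifies it only by saying the Hilbert cube is contractible, and, as you observe, contractibility of the \emph{space} gives a homotopy of $h$ to a constant through continuous maps, not an isotopy through homeomorphisms; the paper's one-line argument is an elision of a real theorem.

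The gap is in your replacement mechanism, which fails for two concrete reasons. First, a finite tuple of points of $\U$ does not in general lie in a finite-dimensional face: a typical point of $[0,1]^{\infty}$ has all coordinates in the open interval, hence lies in no proper face, and two points generally differ in infinitely many coordinates, so no slice $[0,1]^N\times\{\mathrm{pt}\}$ contains both; at best a small perturbation argument could be attempted, which is no longer elementary. Second, and more seriously, the reduction to the homeomorphisms of \cref{thm:CDH} is circular. Those homeomorphisms are not supported near the finite tuple (the theorem only prescribes where the tuple goes), and point-pushing along paths of the tuple produces an isotopy from $\Id_\U$ to \emph{some} homeomorphism $g$ agreeing with $h$ on the tuple --- not to $h$ itself. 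Since $\alpha_h$ depends on all of $h$ and not merely on the images of finitely many evaluation points, you would still need to connect $h\circ g^{-1}$, a homeomorphism fixing the tuple, to the identity, which is the original problem again. What is actually needed is a genuine result of infinite-dimensional topology: every homeomorphism of the Hilbert cube is isotopic to the identity (Wong's theorem; in fact $\mathrm{Homeo}(\U)$ is contractible, and the standard machinery is that finite sets are $Z$-sets, with $Z$-set unknotting supplying ambient isotopies). Citing that result is the honest repair of both your sketch and the paper's. Finally, note that even granted the isotopy $(h_t)$, your continuity check only treats the stagewise pullbacks; the second leg of $\alpha_h$ comes from an approximate intertwining, and one must make that intertwining uniform in $t$ to get a point-norm continuous path of automorphisms of the limit --- a convergence point your proposal, like the paper, leaves implicit.
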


\section{On Villadsen idempotents.}
The essential idea is very simple:  consider a sequential inductive limit of Villadsen's form, where each object is the tensor square of the preceding object, and each connecting map is, up to a perturbation, the corresponding doubling of the previous connecting map. Succinctly put, the perturbations are chosen large enough so that we obtain a simple (Villadsen) C*-algebra, but small enough to obtain the desired idempotence property. There is of course nothing about this that requires  Villadsen's form of inductive limit, but we may as well use a specific case for illustrating the concept.

It is a general fact in analysis  that repeated squaring will, when it converges, produce an idempotent. We may as well choose the initial object to be approximately an idempotent, although it is not necessary to do so.
Let us thus choose initial object $A_1 := M_2 (C(\U)),$ and let $\phi_{\{y\}}$ denote a \vcm with one evaluation point, $y\in \U.$ Let $h_n\colon \U\to\U$ be a sequence of homeomorphisms such that:
\begin{itemize}
  \item The orbit of the point $y\in\U$ under the $h_n$ is dense in $\U$
  \item The homeomorphisms  $ h_n \colon \U\to\U$ satisfy $\dist(h_n,h_{n+1}) < \frac{1}{n}.$ 
\end{itemize}
Now, the first property insures that, in view of \cref{conjugation.lemma}, the inductive limit

\begin{equation}\label{Ind.lim.def.of.V2}
V_2  \isom \protect\varinjlim_{i \to \infty}\left( A_1^{\tensor^{{2^{i-1}}}}, \underset {2^i}{\oplus}  {\hat h_{i}\inv} \compose \phi_{\{y\}} \compose {\hat h_{i} } \right)
\end{equation}

has a dense set of evaluation points $\{h_i (y)\}_{i=1}^\infty$ and is therefore simple. The second property insures the process converges, and as pointed out before, we then automatically obtain the desired idempotence property.
To see this in detail, we shall compare the obtained inductive limit decomposition of $V_2$ with that of $V_2 \tensor V_2.$ We exhibit both limits in the rows of the diagram below, where to reduce the size of the diagram, we temporarily denote the objects of the previous inductive limit (in \cref{Ind.lim.def.of.V2}) by $A_i$ and denote the connecting maps by $\phi_i.$
Then we have:
\begin{equation}\label{VandVVindlim:eqn}
\begin{tikzcd}
A_1 \arrow{r}{\phi_1} & A_2 \arrow{r}{\phi_2}\arrow{d}  & A_3 \arrow{r}{\phi_3} \arrow{d} &
A_4 \arrow{r}{\phi_4} \arrow{d}  &A_5  \arrow{d} \cdots \arrow{r}[swap]{\lim}& V_2 \\
                                  & A_1\tensor A_1  \arrow{r}{\phi_1\oplus\phi_1} & A_2\tensor A_2 \arrow{r}{\phi_2\oplus\phi_2}& A_3\tensor A_3\arrow{r}{\phi_3\oplus\phi_3} &A_4\tensor A_4 \cdots  \arrow{r}[swap]{\lim}& V_2\tensor V_2 \\
\end{tikzcd}
\end{equation}
The vertical arrows are the natural isomorphisms of $A_i \tensor A_i$ with $A_{i+1},$ and the diagram is asymptotically commutative because of how the homeomorphisms were chosen.
This shows that $V_2\isom V_2\tensor V_2,$ as claimed.
Setting the initial object $A_1$ equal to $\M{p}(C(\U))$ we obtain from the above construction a family of  idempotents, $V_p,$ thus proving:
\begin{theorem} There is an idempotent in the class of Villadsen algebras of the first kind, and it is simple, separable and  nuclear.\label{th:VI1}\end{theorem}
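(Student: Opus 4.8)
The plan is to turn the construction sketched in \eqref{Ind.lim.def.of.V2} and \eqref{VandVVindlim:eqn} into a complete argument and to verify the three asserted properties in turn. First I would fix $p$, take the initial object $A_1 := \M{p}(C(\U))$ and a base point $y \in \U$, and produce a sequence of self-homeomorphisms $h_n \colon \U \to \U$ meeting the two displayed conditions: density of the orbit $\{h_n(y)\}$ in $\U$, and $\dist(h_n, h_{n+1}) < \tfrac{1}{n}$. This is exactly where \cref{thm:CDH} is used: the $\omega$-homogeneity of the Hilbert cube allows $y$ to be carried onto any prescribed countable dense subset, and a diagonal interpolation in the homeomorphism metric then arranges that consecutive homeomorphisms are as close as demanded. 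With these data I form $V_p$ as the limit \eqref{Ind.lim.def.of.V2}, now with $A_1 = \M{p}(C(\U))$.

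I would next dispatch the structural properties, which are inherited from the building blocks. Each $\M{k}(C(\U))$ is separable, since $\U$ is compact metric and hence $C(\U)$ is separable, and each is nuclear, being a matrix algebra over a commutative C*-algebra; both properties pass to a sequential inductive limit along unital $*$-homomorphisms, so $V_p$ is separable and nuclear. For simplicity I would invoke \cref{conjugation.lemma}: it identifies the evaluation points of the $i$-th connecting map of $V_p$ as exactly $\{h_i(y)\}$, and by the first condition these are dense in $\U$, which by Villadsen's simplicity criterion recalled after \cref{villadsen} makes the limit simple.

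The substance of the theorem is the idempotence $V_p \isom V_p \tensor V_p$, and here I would work inside the staggered diagram \eqref{VandVVindlim:eqn}. The vertical arrows are the canonical identifications $A_{i+1} \isom A_i \tensor A_i$, valid because each object in \eqref{Ind.lim.def.of.V2} is the tensor square of its predecessor. Under these identifications the bottom map $\phi_i \oplus \phi_i$ is the doubling of the $V_p$-connecting map, which by construction is the conjugate of $\phi_{\{y\}}$ by $\hat h_i$, while the top map $\phi_{i+1}$ is the corresponding conjugate by $\hat h_{i+1}$. By \cref{conjugation.lemma} the only discrepancy between the two composites around each square is the difference between conjugating by $h_i$ and by $h_{i+1}$, and the second condition $\dist(h_i, h_{i+1}) < \tfrac{1}{i} \to 0$ drives this discrepancy to zero, so the diagram is asymptotically commutative. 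As the vertical maps are honest isomorphisms, an Elliott-type approximate intertwining argument then produces an isomorphism of the two limits, that is, $V_p \isom V_p \tensor V_p$.

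I expect the one real obstacle to be the convergence of this approximate intertwining. Asymptotic commutativity of the individual squares is not by itself sufficient; one must check that, for a fixed element transported along the system, the accumulated perturbations are summable, so that the two transported images actually coincide in the limit. Since $\sum_i \tfrac{1}{i}$ diverges, the crude bound from the second condition is not directly summable, so the delicate point is to argue that each perturbation acts only on the tensor factor freshly introduced at its stage, where the transported element is essentially the unit, and is therefore absorbed locally rather than accumulating; failing that, one tightens the homeomorphism estimate to a summable rate such as $2^{-i}$, which the interpolation step can supply without harming density of the orbit. Once this estimate is in hand the intertwining closes and, together with the preceding two paragraphs, establishes the theorem for every $p$.
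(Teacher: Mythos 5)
Your construction reproduces the paper's own proof step for step: same initial object $\M{p}(C(\U))$, same two conditions on the homeomorphisms $h_n$ obtained from \cref{thm:CDH}, simplicity from dense evaluation points via \cref{conjugation.lemma}, and idempotence from the staggered diagram \eqref{VandVVindlim:eqn} with the canonical vertical identifications $A_{i+1}\isom A_i\tensor A_i$. The separability and nuclearity argument, which the paper leaves implicit, is standard and correct as you state it.

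The issue is your final paragraph. You are right that the worry is genuine --- the paper only asserts that $\dist(h_n,h_{n+1})<\tfrac1n$ makes the diagram ``asymptotically commutative,'' and since the connecting maps are isometric the per-square discrepancies do not decay under transport, so a naive Elliott intertwining needs summable errors. But your fallback repair fails: if $\dist(h_n,h_{n+1})$ is summable in the uniform metric (e.g.\ bounded by $2^{-n}$), then $d\bigl(h_n(y),h_{n+1}(y)\bigr)\le\dist(h_n,h_{n+1})$ makes the orbit $\bigl(h_n(y)\bigr)$ a Cauchy sequence converging to a single point, so it cannot be dense in $\U$ and simplicity is destroyed. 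Density in fact forces the total step length $\sum_n d\bigl(h_n(y),h_{n+1}(y)\bigr)$ to diverge, so the non-summability is intrinsic to the construction, not an artifact of a crude estimate; ``the interpolation step can supply [a summable rate] without harming density'' is false. A complete argument must therefore go through your first suggestion (showing the stage-$i$ perturbation is absorbed locally on the elements actually transported), or else sidestep quantitative control entirely: $V_p\tensor V_p$ is itself a Villadsen system over $\U\times\U\isom\U$ of the same combinatorial type as the shifted system for $V_p$, differing only in its (dense) evaluation points, so the basepoint-independence corollary following \cref{conjugation.lemma} --- proved by exact intertwining via the strong $n$-homogeneity of \cref{thm:CDH} --- yields $V_p\tensor V_p\isom V_p$ with no summability hypothesis at all.
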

Certainly, the class of C*-algebras which are \emph{$V_p$-stable,} i.e. the class of C*-algebras $W$ for which $W\tensor V_p\isom W,$ would seem to be interesting.
The product of all of the $V_p$ need not even be Villadsen, but only locally AH. (See \cite{DE99}.)  We denote this product by $V_\infty := \bigotimes_p V_p,$ where the tensor product ranges over all primes $p.$
%

The usual concept of a homotopy of compact spaces in topology, under the Gelfand functor, gives rise to the following definition or lemma:
\begin{definition} Let $\psi_{0}, \psi_{1}\colon A\to B$ be unital *-homomorphisms of unital C*-algebras. We say that they are \emph{homotopic} if there is a point-norm continuous path of unital *-homomorphisms $\Psi_t\colon A\to B$
such that $\Psi_0=\psi_0$ and $\Psi_1=\psi_1.$ This is equivalent to being given a unital $*$-homomorphism $\Psi\colon A\to C([0,1])\tensor B$ such that $(\pi_0 \tensor\Id) \compose \Psi=\psi_0$ and $(\pi_1 \tensor\Id) \compose \Psi=\psi_1,$ where $\pi_t$ denotes the natural evaluation map on $C([0,1]). $ If the path of unital *-homomorphisms $\Psi_t\colon A\to B$ is operator norm continuous, then we say there is a \emph{norm homotopy.}
\end{definition}
Homotopies are well behaved under tensor products:
\begin{lemma} If $f_1$ is (norm) homotopic to $f_0$, and $g_1$ is (norm) homotopic to $g_0$, then $f_1 \tensor g_1$ is (norm) homotopic to $f_0\tensor g_0.$
\label{lem:product.of.homotopies}\end{lemma}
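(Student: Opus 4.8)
The plan is to realize the composite homotopy as the \emph{diagonal} of the two given ones, and to package it functorially so as to avoid delicate hand estimates. First I would rewrite each hypothesis in the dual form supplied by the second characterization in the definition of homotopy above: the homotopy $f_0\simeq f_1$ becomes a single unital $*$-homomorphism $F\colon A\to C([0,1])\tensor B$ with $(\pi_t\tensor\Id)\compose F=f_t$ for $t\in\{0,1\}$, and likewise $g_0\simeq g_1$ becomes $G\colon A'\to C([0,1])\tensor B'$ with $(\pi_t\tensor\Id)\compose G=g_t.$ The goal is then to manufacture a unital $*$-homomorphism $H\colon A\tensor A'\to C([0,1])\tensor(B\tensor B')$ whose two evaluations are $f_0\tensor g_0$ and $f_1\tensor g_1.$

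Next I would form the tensor product $F\tensor G\colon A\tensor A'\to (C([0,1])\tensor B)\tensor(C([0,1])\tensor B')$ and reorganize the target, using the commutativity and associativity of the minimal tensor product together with the identification $C([0,1])\tensor C([0,1])\isom C([0,1]\times[0,1]),$ as $C([0,1]\times[0,1])\tensor(B\tensor B').$ Restriction of functions to the diagonal $\{(t,t)\}\subset[0,1]\times[0,1]$ is a surjective unital $*$-homomorphism $\delta\colon C([0,1]\times[0,1])\to C([0,1]),$ and I would set $H:=(\delta\tensor\Id_{B\tensor B'})\compose(F\tensor G).$ The endpoint check is then immediate: evaluation at $t\in\{0,1\}$ corresponds to $\pi_t\compose\delta,$ which under the above identification is exactly $\pi_t\tensor\pi_t$ on $C([0,1])\tensor C([0,1]);$ hence $(\pi_t\tensor\Id)\compose H=\bigl((\pi_t\tensor\Id)\compose F\bigr)\tensor\bigl((\pi_t\tensor\Id)\compose G\bigr)=f_t\tensor g_t,$ as required. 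This exhibits $H$ as a homotopy from $f_0\tensor g_0$ to $f_1\tensor g_1.$

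The hard part will be ensuring that all these tensor products extend consistently to the completions, rather than merely to the algebraic tensor products. I expect to handle this by working throughout with the minimal tensor product $\mintensor$ and invoking its functoriality for $*$-homomorphisms, noting that $C([0,1])$ is commutative and hence nuclear, so that the relevant tensor norms are unambiguous and the identification $C([0,1])\mintensor C([0,1])\isom C([0,1]\times[0,1])$ is legitimate. An equivalent, more pedestrian route is to argue at the level of paths by setting $H_t=f_t\tensor g_t$ and verifying point-norm continuity on elementary tensors via the estimate $\|H_t(a\tensor c)-H_s(a\tensor c)\|\le\|a\|\,\|g_t(c)-g_s(c)\|+\|f_t(a)-f_s(a)\|\,\|c\|,$ which uses that each $f_t,g_t$ is contractive; the genuine obstacle is then the passage from the dense algebraic tensor product to its completion, where the uniform boundedness (equicontinuity) coming from contractivity of the $*$-homomorphisms $H_t$ is exactly what licenses the extension.
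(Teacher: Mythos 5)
Your proposal is correct and is essentially the paper's own argument: tensor the two homotopies $F\tensor G$ into $C([0,1]\times[0,1])\tensor(B\tensor B')$ (the paper's $C(\square,B_1\tensor B_0)$) and restrict to the diagonal of the unit square, reading off the endpoints. Your added care about the minimal tensor norm and the extension from the algebraic tensor product is a reasonable elaboration of details the paper leaves implicit, but it does not change the route.
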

\begin{proof} If $\Psi_1 \colon A_1\to C([0,1])\tensor B_1$ and  $\Psi_0 \colon A_0\to C([0,1])\tensor B_0$ are homotopies, their tensor product is a *-homomorphism from $A_1\tensor A_0$ to $C(\square,B_1\tensor B_0),$ where $\square$ denotes the unit square. Restricting the functions in $C(\square,B_1\tensor B_0)$ to the diagonal of the unit square, we obtain at one endpoint  $f_1 \tensor g_1$ and at the other endpoint $f_0\tensor g_0.$ The given topologies are inherited by the tensor product map, so if we begin with norm homotopies, we end up with a norm homotopy.
\end{proof}
\begin{definition} Let us say that a unital C*-algebra $A$ has \emph{(norm) homotopic half flip} with respect to a given tensor product if the tensor factor inclusion maps $\phi\colon a\mapsto a\tensor 1_A$ and $\psi\colon a\mapsto 1_A \tensor a$ are (norm) homotopic as maps $\phi,\psi\colon A\to A\tensor A.$ Let us say that a unital C*-algebra $A$ has \emph{(norm) homotopic  flip} with respect to a given tensor product if the identity map $\Id\colon A \tensor A \to A \tensor A$ and the flip map $\sigma\colon A\tensor A\to A\tensor A$ are norm homotopic.\end{definition}
We have the following sufficient condition for the norm homotopic half flip property:
\begin{proposition} A unital C*-algebra $A$ has the norm homotopic half flip property if the automorphism group of $A\tensor A$ is norm contractible. \label{prop:hf.if.contractible}
\end{proposition}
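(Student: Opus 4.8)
The plan is to exploit the flip automorphism as the single automorphism that converts one tensor-factor inclusion into the other. Write $\sigma\colon A\tensor A\to A\tensor A$ for the flip, $\sigma(a\tensor b)=b\tensor a$; this is a genuine automorphism of $A\tensor A$ (on the algebraic tensor product it is an isometric $*$-isomorphism, and it extends to the completion for the given tensor norm). The key algebraic observation is that the two inclusions are related by $\sigma$: we compute $\sigma\compose\phi\colon a\mapsto\sigma(a\tensor 1_A)=1_A\tensor a$, so that $\psi=\sigma\compose\phi$. Thus, to produce a homotopy from $\phi$ to $\psi$ it suffices to move $\sigma$ to the identity automorphism through a continuous path inside $\operatorname{Aut}(A\tensor A)$ and then precompose that path with $\phi.$

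First I would record that a contractible space is path-connected: any contraction of $\operatorname{Aut}(A\tensor A)$ onto a basepoint supplies, for each automorphism, a path to that basepoint, and concatenating the path starting at $\Id$ with the reverse of the path starting at $\sigma$ yields a path $t\mapsto\alpha_t$ with $\alpha_0=\Id$ and $\alpha_1=\sigma.$ Here the relevant topology on $\operatorname{Aut}(A\tensor A)$ is the point-norm topology, so this path is point-norm continuous, meaning that for each fixed $x\in A\tensor A$ the assignment $t\mapsto\alpha_t(x)$ is norm continuous.

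Next I would set $\Psi_t:=\alpha_t\compose\phi\colon A\to A\tensor A.$ Each $\Psi_t$ is a composition of the unital $*$-homomorphism $\phi$ with the automorphism $\alpha_t,$ hence is itself a unital $*$-homomorphism, and the family is point-norm continuous in $t$ because $t\mapsto\alpha_t(\phi(a))$ is norm continuous for every $a\in A.$ At the endpoints this gives $\Psi_0=\Id\compose\phi=\phi$ and $\Psi_1=\sigma\compose\phi=\psi,$ which is precisely a homotopy witnessing the homotopic half flip property.

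I expect the proposition to be essentially immediate once the identity $\psi=\sigma\compose\phi$ is isolated; in particular only path-connectedness of $\operatorname{Aut}(A\tensor A)$ is actually used, so the contractibility hypothesis is more than enough. The only point demanding genuine care is the continuity bookkeeping: one must be working with the point-norm topology on the automorphism group, under which a continuous path of automorphisms composed with a fixed $*$-homomorphism yields a homotopy in the sense of the preceding definition, and one should confirm that the flip extends to a bona fide automorphism of the chosen C*-completion of the tensor product.
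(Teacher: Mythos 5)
Your proof is correct and follows exactly the paper's argument: contractibility gives path-connectedness of $\operatorname{Aut}(A\tensor A)$, a path $\alpha_t$ joins $\Id$ to the flip $\sigma$, and composing with $\phi$ (using $\psi=\sigma\compose\phi$) yields the desired homotopy. Your additional remarks on the point-norm topology and on only needing path-connectedness are accurate but do not change the route.
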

\begin{proof} Being contractible implies path-connectedness, so the flip automorphism $\sigma\colon A\tensor A\to A\tensor A$ is connected to the
trivial automorphism  $\Id\colon A\tensor A\to A\tensor A$ by a norm continuous path of automorphisms, denoted $\alpha_t|_{t\in [0,1]}.$
But then the given tensor factor inclusion maps $\phi$ and $\psi$ are also connected by such a path, namely $\alpha_t \circ \phi,$ where $t\in [0,1]$ is the parameter.
\end{proof}


\begin{theorem} The Villadsen idempotents $V_p$ have the norm homotopic flip property.
\label{th:Vp.has.homotopic.flip}\end{theorem}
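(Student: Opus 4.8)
The plan is to analyze the flip $\sigma$ on $V_p\tensor V_p$ through the inductive limit decomposition of \cref{VandVVindlim:eqn} and to show it is homotopic to the identity by homotoping it level by level. First I would record that $V_p\tensor V_p$ is again a simple Hilbert cube \va: writing the $i$-th building block of $V_p$ as $A_i=\M{k_i}(C(\U^{\times n_i}))$, the $i$-th building block of $V_p\tensor V_p$ is
\[
A_i\tensor A_i=\M{k_i^2}\bigl(C(\U^{\times 2n_i})\bigr),
\]
and since every finite power of the Hilbert cube is again the Hilbert cube, the base spaces $\U^{\times 2n_i}$ are all copies of $\U$. Consequently \cref{cor:can.homotope.basepoints} is available for $V_p\tensor V_p$.

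Next I would decompose the building-block flip. On $A_i\tensor A_i$ the flip $\sigma_i$ is the tensor swap, which factors as $\sigma_i=\operatorname{Ad}(u_i)\compose\hat\tau_i$, where $u_i\in\M{k_i^2}$ is the permutation unitary implementing the swap of the two matrix tensor factors and $\tau_i\colon\U^{\times 2n_i}\to\U^{\times 2n_i}$ is the homeomorphism interchanging the two blocks of $n_i$ coordinates; these two commute since they act on complementary tensor legs. Each factor is separately homotopic to the identity: the unitary $u_i$ lies in the connected group $U(\M{k_i^2})$, so $\operatorname{Ad}(u_i)$ is joined to the identity by a path of inner automorphisms, while $\tau_i$ is a self-homeomorphism of the Hilbert cube and is therefore isotopic to the identity, so $\hat\tau_i$ is joined to the identity exactly as in \cref{cor:can.homotope.basepoints}. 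By \cref{lem:product.of.homotopies} the composite $\sigma_i$ is then homotopic to $\Id_{A_i\tensor A_i}$.

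The point that lets the level-wise homotopies glue is that the connecting maps are symmetric doublings, hence exactly flip-equivariant: since the bottom-row maps $\Phi_i$ of \cref{VandVVindlim:eqn} act as $\phi_i$ on each of the two tensor legs, they satisfy $\sigma_{i+1}\compose\Phi_i=\Phi_i\compose\sigma_i$, so that $\sigma=\varinjlim\sigma_i$ is a well-defined automorphism of $V_p\tensor V_p$. I would then propagate the level-$i$ homotopy upward through $\Phi_i$: the base-space isotopies $\tau_i^{(t)}$ can be chosen compatibly with the coordinate-projection and evaluation structure of $\phi_i$ (pulling back the isotopy along the projection legs and using the room furnished by \cref{thm:CDH,cor:can.homotope.basepoints}), and the unitary paths $u_i^{(t)}$ compatibly with the matrix amplifications, so that $\Phi_i\compose\sigma_i^{(t)}\approx\sigma_{i+1}^{(t)}\compose\Phi_i$ with intertwining errors controlled well enough to pass to the limit. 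Assembling these coherent paths yields a point-norm continuous path of automorphisms of $V_p\tensor V_p$ from $\Id$ to $\sigma$, which is precisely the homotopic flip property.

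The main obstacle I anticipate is exactly this last gluing step. Producing the separate level-wise homotopies is routine, but ensuring that they are mutually compatible through the connecting maps — so that they descend to a genuine homotopy on the limit rather than merely at each finite stage — is the delicate part. The exact flip-equivariance of the doubling maps pins the two endpoints of the homotopy together, and the contractibility of the Hilbert cube (via \cref{cor:can.homotope.basepoints}) combined with the connectedness of the unitary groups supplies enough freedom to interpolate coherently; the real work lies in organizing these choices so that the successive intertwining errors are summable.
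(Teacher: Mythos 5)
Your proposal follows essentially the same route as the paper's proof: you decompose the level-$i$ flip on $A_i\tensor A_i$ into the inner matrix swap on $\M{k_i}\tensor\M{k_i}$ and the coordinate-swap homeomorphism of $\U^{\times 2n_i}\isom\U$, homotope each to the identity (unitary path; contractibility of the Hilbert cube via \cref{cor:can.homotope.basepoints}), combine them with \cref{lem:product.of.homotopies}, and pass the homotopies through the flip-equivariant doubled connecting maps of \cref{VandVVindlim:eqn}, which is exactly the paper's diagram argument with the maps $\pi_t\compose\sigma_i$. If anything, you are more explicit than the paper about the delicacy of choosing the level-wise homotopies coherently so that the intertwining errors are summable in the limit, a point the paper dispatches with the remark that the matrix flip homotopies can be chosen consistently because the connecting maps respect the tensor product.
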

\begin{proof}
Recall that the objects $A_i$ of a Villadsen inductive limit of our form were chosen to be $M_n(C(\U)).$ Therefore, $V_p\tensor V_p$ is itself a Villadsen algebra, with a decomposition of the form
\begin{equation*}\begin{tikzcd}
  A_1\tensor A_1  \arrow{r}{\phi_1\oplus\phi_1} & A_2\tensor A_2 \arrow{r}{\phi_2\oplus\phi_2}& A_3\tensor A_3\arrow{r}{\phi_3\oplus\phi_3} &A_4\tensor A_4 \cdots   \\
\end{tikzcd}.\end{equation*}
 In particular, a self-homotopy of $\U\times\U$ will not change the algebra obtained, it will just permute the choice of basepoints, which we are free to do, for example,  by \cref{cor:can.homotope.basepoints}.
 Actually, in the special case under consideration, the change of basepoints is implemented by the flip $\sigma_{\U}$ on $C(\U)\tensor C(\U)\isom C(\U).$
  The flip $\sigma_{M_k}$ on a tensor product of matrix algebras is  inner, and the flip $\sigma_{\U}$ on $C(\U)\tensor C(\U)\isom C(\U)$ is homotopically inner by \cref{prop:hf.if.contractible}.
Now, letting the matrix flip act on the first factor of $(M_n \tensor M_n)\tensor (C(\U)\tensor C(\U)),$ and the flip $\sigma_{\U}$ act on the second factor,
 we have maps $\sigma_i \colon A_i \tensor A_i\to C([0,1])\tensor (A_i \tensor A_i)$  implementing the homotopically inner flip on $A_i \tensor A_i$ as below

\begin{equation*}
\begin{tikzcd}
 A_1\tensor A_1  \arrow{r}{\phi_1\oplus\phi_1}\arrow{d}{\pi_t\compose \sigma_1} & A_2\tensor A_2 \arrow{r}{\phi_2\oplus\phi_2}\arrow{d}{\pi_t\compose \sigma_2}& A_3\tensor A_3\arrow{r}{\phi_3\oplus\phi_3} \arrow{d}{\pi_t\compose \sigma_3}&A_4\tensor A_4 \arrow{d}{\pi_t\compose \sigma_4} \cdots   \\
  A_1\tensor A_1  \arrow{r}{\phi_1\oplus\phi_1} & A_2\tensor A_2 \arrow{r}{\phi_2\oplus\phi_2}& A_3\tensor A_3\arrow{r}{\phi_3\oplus\phi_3} &A_4\tensor A_4 \cdots   \\
\end{tikzcd}
\end{equation*}
where $\pi_t$ denotes the natural evaluation map on $C([0,1]).$
To assure that the above diagram is sufficiently commutative, we need to choose the matrix flip homotopies $\sigma_{M_k}\colon \M{k} \tensor \M{k}\to C([0,1])\tensor (\M{k} \tensor \M{k})$ to be consistent with each other.
This is straightforward because the matrix flip is inner, and the connecting maps respect the tensor product. Then the vertical maps above are the tensor product of a homotopy of matrix algebras and a norm homotopy of the commutative  C*-algebra $C(\U\times \U),$ as in \cref{lem:product.of.homotopies}. This proves the norm homotopic flip property.
\end{proof}

\begin{theorem}[{\cite[Theorem 3.2]{Lin}}]  Let $A$ be a C*-algebra that is separable and simple. An automorphism of $A$ that is norm homotopic to the identity is approximately inner.
\label{th:norm.homotopy}\end{theorem}

This lets us  improve norm homotopic flip to approximately inner flip:

\begin{proposition}Let $A$ be a separable unital  simple C*-algebra with norm homotopic flip. Then the algebra $A$ also has approximately inner flip.
\label{prop:improving.a.flip}
\end{proposition}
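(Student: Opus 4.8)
The plan is to reduce the statement to the corollary immediately above by applying it to the tensor square $B := A \tensor A$. First I would verify that $B$ satisfies the hypotheses of that corollary. Separability of $B$ is inherited from $A$. For simplicity, I would use that $A$ is simple and nuclear (as are all the algebras under consideration here), so that $A \tensor A$ is again simple; thus $B$ is a separable, unital, simple C*-algebra, which is exactly the setting required.

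Next I would unwind the homotopic flip hypothesis. By definition it asserts that the flip $\sigma\colon A\tensor A\to A\tensor A$ and the identity $\Id\colon A\tensor A\to A\tensor A$ are homotopic through unital $*$-homomorphisms. In other words, $\sigma$ is an automorphism of the separable simple algebra $B$ that is homotopic to $\Id_B$. Applying the preceding corollary to $B$ and to the automorphism $\sigma$, I conclude that $\sigma$ is approximately inner, i.e.\ there are unitaries $u_n\in A\tensor A$ with $u_n(a\tensor b)u_n^*\to b\tensor a$ for all $a,b\in A$. That is precisely the assertion that $A$ has approximately inner flip.

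The proposition is therefore a short deduction once the corollary is in hand, and the genuine content lives in the corollary itself. The step I would be most careful about is the one packaged inside that corollary: the uniqueness theorem \cref{lem:classifiability} only produces a unitary in $M_{n+1}(B)$ intertwining $\sigma\oplus\Id^{\oplus n}$ and $\Id\oplus\Id^{\oplus n}$ up to $\delta$, so the real work is the de-stabilization that replaces this stable approximate unitary equivalence by an honest approximate unitary equivalence implemented by unitaries of $B$ alone. This is exactly what upgrades \emph{homotopic} to \emph{approximately inner} rather than merely \emph{stably approximately inner}, and it is where simplicity of $B$ is essential. The only nontrivial hypothesis to check for the proposition proper is thus the simplicity of $A\tensor A$, which is where nuclearity enters.
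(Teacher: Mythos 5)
Your argument is correct and is essentially the paper's own proof: the paper likewise works in $B=A\tensor A$, observing that the homotopy forces $KK(\sigma)=KK(\Id)$ and then invoking the uniqueness theorem (\cref{lem:classifiability}) with $\iota=\phi=\Id$ and $\psi=\sigma$, strong fullness of $\iota$ coming from simplicity of $A\tensor A$ --- which is exactly the content of the corollary you route through. Your cautionary remark about de-stabilizing the unitary in $M_{n+1}(B)$ is well placed and is precisely the point the paper also defers (via its citation of Lin's Theorem 3.2); note only that simplicity of the minimal tensor product of simple C*-algebras holds without nuclearity, though nuclearity is indeed needed to meet the hypotheses of \cref{lem:classifiability}.
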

\begin{proof} We are given that the flip automorphism $\sigma\colon A\tensor A\to A\tensor A$ is connected to the identity automorphism $\Id \colon A\tensor A\to A\tensor A$ by a norm continuous path of automorphisms. We are to show that the flip automorphism is in fact an approximately inner automorphism of $A\tensor A,$ and this follows from \cref{th:norm.homotopy}.
\end{proof}

\begin{corollary} The tensor idempotents $V_p$ have approximately inner flip.
\end{corollary}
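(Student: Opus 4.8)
The plan is to obtain the corollary by assembling two results already established: \cref{th:Vp.has.homotopic.flip}, which gives each $V_p$ the homotopic flip property, and \cref{prop:improving.a.flip}, which upgrades homotopic flip to approximately inner flip for any algebra meeting its hypotheses. The only thing that requires checking is that $V_p$ satisfies the three standing assumptions of \cref{prop:improving.a.flip}: separable, unital, and simple.

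First I would confirm these three properties for $V_p$. Separability holds because, in the construction surrounding \eqref{Ind.lim.def.of.V2}, $V_p$ is a sequential inductive limit of the building blocks $A_1^{\tensor^{2^{i-1}}}$ with $A_1=\M{p}(C(\U))$; since the Hilbert cube $\U$ is a compact metric space with a countable base, each $C(\U^{\times m})$ is separable, hence so is each building block, and a sequential limit of separable C*-algebras is separable. Unitality is immediate, since the initial object $\M{p}(C(\U))$ is unital and every Villadsen connecting map is unital by \cref{vmap}. Simplicity was recorded in the discussion following \eqref{Ind.lim.def.of.V2}: the first defining property of the homeomorphisms $h_n$ forces the associated evaluation points $\{h_i(y)\}$ to be dense in $\U$, and by the remark following \cref{villadsen} a Villadsen algebra with a dense set of evaluation points is simple. (Nuclearity, although not among the listed hypotheses of \cref{prop:improving.a.flip}, is also available from \cref{th:VI1} and is what ultimately feeds the uniqueness theorem inside that proposition.)

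With the hypotheses in hand, the remaining argument is a single application of \cref{prop:improving.a.flip} with $A=V_p$, turning the homotopic flip supplied by \cref{th:Vp.has.homotopic.flip} into approximately inner flip. I do not anticipate a genuine obstacle at this stage, precisely because the substantive work lies upstream: \cref{th:Vp.has.homotopic.flip} produces the homotopic flip by exploiting the contractibility of the Hilbert cube (so that $\sigma_\U$ is homotopically inner) together with the innerness of the matrix flip, and \cref{prop:improving.a.flip} converts homotopy into approximate innerness via the uniqueness theorem of \cref{lem:classifiability}. The corollary is therefore a matter of verifying that $V_p$ qualifies and then quoting these two facts in sequence.
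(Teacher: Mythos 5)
Your proposal is correct and matches the paper's (implicit) argument exactly: the corollary is stated without proof precisely because it is the immediate combination of \cref{th:Vp.has.homotopic.flip} with \cref{prop:improving.a.flip}, applied to the separable, unital, simple algebra $V_p$. Your extra care in verifying the standing hypotheses --- including the nuclearity needed downstream in \cref{lem:classifiability}, which the statement of \cref{prop:improving.a.flip} omits --- is a sound and welcome addition rather than a deviation.
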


We recall the known result that if an algebra $A$ has approximately inner flip, then its infinite tensor product becomes $\Z$-stable and also has the strongly self-absorbing property \cite[Prop 1.9]{TW2009}:
\begin{corollary} The infinite tensor product $\bigotimes V_p $ is strongly self-absorbing. \end{corollary}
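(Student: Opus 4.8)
The plan is to invoke the cited structural theorem \cite[Prop 1.9]{TW2009}, which asserts that the infinite tensor product of a unital, simple, separable, nuclear C*-algebra with approximately inner flip is strongly self-absorbing. Thus the proof reduces to verifying the hypotheses of that proposition for $A=V_p$, and then assembling the infinite tensor product $\bigotimes V_p$ so that the conclusion transfers. First I would note that each $V_p$ is unital, simple, separable, and nuclear by \cref{th:VI1}, and has approximately inner flip by the immediately preceding corollary. This is exactly the input data the cited result requires.

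The main point to address carefully is the distinction between the infinite tensor power $A^{\tensor\infty}$ of a \emph{single} algebra $A$ (which is the hypothesis of \cite[Prop 1.9]{TW2009}) and the infinite tensor product $\bigotimes_p V_p$ ranging over \emph{distinct} primes $p$. The cleanest route I would take is to observe that since each $V_p$ is an idempotent, $V_p\isom V_p\tensor V_p\isom V_p^{\tensor\infty}$, so each factor already absorbs its own infinite tensor power; combined with approximately inner flip, \cite[Prop 1.9]{TW2009} gives that each $V_p$ is itself strongly self-absorbing. It then suffices to recall that a (countable) tensor product of strongly self-absorbing C*-algebras is again strongly self-absorbing: the half-flip homotopies and the absorbing isomorphisms tensor together, using \cref{lem:product.of.homotopies} to combine the homotopies factorwise, and simplicity, separability, and nuclearity are all preserved under minimal tensor products of such algebras. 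The reindexing in the definition of strongly self-absorbing (matching $\bigotimes_p V_p$ with $(\bigotimes_p V_p)\tensor(\bigotimes_p V_p)$) is handled by a standard bijection of the countable index set, just as in the diagram \eqref{VandVVindlim:eqn}.

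The step I expect to be the main obstacle is the passage from approximately inner flip to the full strongly self-absorbing conclusion for each $V_p$ individually, i.e. ensuring that the approximately unitarily equivalent half-flip genuinely upgrades the idempotence isomorphism $V_p\isom V_p\tensor V_p$ to one that is approximately unitarily equivalent to the left tensor-factor inclusion $a\mapsto a\tensor 1$. This is precisely the content packaged into \cite[Prop 1.9]{TW2009}, so I would lean on that result rather than reprove it; the delicate analytic work (an intertwining argument producing the required isomorphism) is already done there. The remaining care is purely bookkeeping: confirming that the hypotheses transfer across the countable tensor product without invoking any property not already established, and that no separability or nuclearity is lost in the infinite product. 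Since minimal tensor products preserve nuclearity and countable tensor products of separable algebras are separable, these checks are routine, and the theorem follows.
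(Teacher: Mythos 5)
Your opening reduction is the paper's: the corollary is meant as a one-line application of \cite[Prop 1.9]{TW2009} to $A=V_p$, reading $\bigotimes V_p$ as the infinite tensor power $V_p^{\tensor\infty}$ of a single $V_p$ (the sentence preceding the corollary says exactly ``if an algebra $A$ has approximately inner flip, then its infinite tensor product \dots has the strongly self-absorbing property''). Since the previous corollary gives $V_p$ approximately inner flip, nothing more is needed --- in particular, the idempotence of $V_p$ plays no role here. Your proposal, however, inserts a step that is genuinely false: the chain $V_p\isom V_p\tensor V_p\isom V_p^{\tensor\infty}$. Idempotence gives isomorphisms $V_p\isom V_p^{\tensor n}$ at every \emph{finite} stage, but $V_p^{\tensor\infty}$ is the inductive limit of those stages along the first-factor embeddings $a\mapsto a\tensor 1$, and the stage isomorphisms need not be (even approximately) compatible with these connecting maps; without an approximate intertwining the limit need not be $V_p$. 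Requiring that the idempotence isomorphism $V_p\isom V_p\tensor V_p$ be approximately unitarily equivalent to $a\mapsto a\tensor 1$ is precisely the definition of strongly self-absorbing, so your argument is circular at exactly the point where it concludes the most. And the conclusion it reaches is too strong: if each $V_p$ were itself strongly self-absorbing it would be $\Z$-stable by \cite{StronglySelfAbsAreZstable}, contradicting the paper's framing that these Villadsen algebras are interesting precisely because such idempotents are not already strongly self-absorbing (the abstract explicitly stops short of producing a new strongly self-absorbing algebra, which, with the technical property, would bear on the UCT). This is why the corollary claims strong self-absorption only for the infinite tensor product and not for $V_p$.

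The rest of your scaffolding --- that countable tensor products of strongly self-absorbing algebras are again strongly self-absorbing, with homotopies combined factorwise as in \cref{lem:product.of.homotopies}, and that separability and nuclearity survive countable minimal tensor products --- is correct as stated but moot, since it rests on the broken step. If one instead insists on reading $\bigotimes V_p$ as ranging over distinct primes (the $V_\infty$ defined earlier in the paper), the repair is not to upgrade each $V_p$ to strongly self-absorbing, but to invoke the version of the Toms--Winter result for a sequence of unital separable algebras each having approximately inner (half) flip, whose infinite tensor product is strongly self-absorbing; each $V_p$ satisfies that hypothesis by the preceding corollary, and no appeal to $V_p\isom V_p^{\tensor\infty}$ is ever required.
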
 
This provides an interesting collection of idempotents. As previously mentioned, we can replace the Hilbert cube $\U$ by an infinite-dimensional sphere. We omit the details, and give just the first step:
\begin{lemma} The  infinite-dimensional sphere $S^\infty$ has  the $\omega$-homogeneity property, and the associated abelian C*-algebra $C(S^{\infty} )$ is an idempotent.
\end{lemma}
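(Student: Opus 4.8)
The plan is to treat both assertions through the single structural fact that the infinite-dimensional sphere is, up to homeomorphism, a separable Hilbert space, and then to transport the two properties we already understand for such spaces. Concretely, I would fix the model $S^\infty = \{x\in\ell^2 : \|x\|=1\}$ with its norm topology, which is a separable, completely metrizable, connected space. The lever for everything is Klee's theorem that the unit sphere of an infinite-dimensional Hilbert space is homeomorphic to the whole space, so that $S^\infty \isom \ell^2.$

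For the idempotence of $C(S^\infty)$ I would argue at the level of base spaces. Since $\ell^2 \oplus \ell^2 \isom \ell^2$ as Hilbert spaces and the product topology agrees with the direct-sum topology, we have $\ell^2 \times \ell^2 \isom \ell^2,$ and therefore
\[
S^\infty \times S^\infty \;\isom\; \ell^2 \times \ell^2 \;\isom\; \ell^2 \;\isom\; S^\infty .
\]
Applying the Gelfand correspondence $C(X)\tensor C(Y)\isom C(X\times Y)$ then yields $C(S^\infty)\tensor C(S^\infty)\isom C(S^\infty\times S^\infty)\isom C(S^\infty),$ which is exactly the asserted tensor idempotence. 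Because $S^\infty$ is not compact in this model, one should read $C$ as the functions vanishing at infinity, so that the identity used is the genuine $C_0(X)\tensor C_0(Y)\isom C_0(X\times Y)$; alternatively one fixes a compact Hilbert-cube-manifold model and the same product-stability argument applies.

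For the $\omega$-homogeneity I would again transport along $S^\infty\isom\ell^2.$ By \cref{thm:CDH} the property to verify is equivalent to strong $M$-homogeneity for every finite $M,$ and this finite version is the step to establish first: given two $M$-point subsets of $S^\infty$ together with a bijection between them, I would produce an ambient self-homeomorphism extending it. The infinite-dimensionality gives enough room — one aligns as many coordinates as possible using an element of the orthogonal group of $\ell^2$ (which acts transitively on orthonormal $M$-frames) and corrects the remainder by a homeomorphism supported in a small region, exactly as in the finite-dimensional manifold case but with no dimensional obstruction. With strong $M$-homogeneity in hand for all finite $M,$ the back-and-forth construction underlying the Bennett--Fort--Unger theorem applies to the Polish space $S^\infty$: enumerate the two given dense sequences and build the homeomorphism as a uniform limit of finite-stage homeomorphisms, each agreeing with the previous one off a controllably small set so that the limit exists and matches the two dense ranges.

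The main obstacle I anticipate is precisely this infinite-dimensional $\omega$-homogeneity, because \cref{thm:CDH} is stated for the Hilbert cube and for \emph{finite}-dimensional manifolds, whereas $S^\infty$ is an infinite-dimensional manifold of a different type (an $\ell^2$-manifold, not a Hilbert-cube manifold). I would therefore lean on the corresponding infinite-dimensional countable-dense-homogeneity results for $\ell^2$ and $\ell^2$-manifolds (Anderson, Bessaga--Pe\l czy\'nski), or reprove the back-and-forth directly from the finite homogeneity above; controlling the convergence of the finite-stage homeomorphisms so that the limit remains a homeomorphism is the delicate point. By contrast, the idempotence half is routine once Klee's homeomorphism $S^\infty\isom\ell^2$ is invoked.
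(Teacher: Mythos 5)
Your proposal is correct and follows essentially the same route as the paper, which models $S^\infty$ by the unitary group of separable Hilbert space (a space likewise homeomorphic to $\ell^2$, where you instead invoke Klee's theorem for the unit sphere) to obtain $S^\infty\times S^\infty\isom S^\infty$ and then cites \cref{thm:CDH} for the $\omega$-homogeneity. Your two caveats are genuine points the paper's two-line proof glosses over: the non-compact model forces $C_0$ in place of $C$ (or a compact model) for the Gelfand step, and \cref{thm:CDH} as stated covers Hilbert-cube manifolds and finite-dimensional manifolds rather than $\ell^2$-manifolds, so one must appeal to the Anderson/Bessaga--Pe\l czy\'nski-type countable-dense-homogeneity of $\ell^2$ or rerun the back-and-forth argument as you outline.
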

\begin{proof} Modelling $S^\infty$ by the unitary group of separable Hilbert space, we have $S^\infty\times S^\infty\isom S^\infty,$ and the $\omega$-homogeneity property is a corollary of \cref{thm:CDH}.
%
\end{proof}

\section{\vas of the second kind}

Villadsen's second construction adds line bundle coefficients to the connecting map, and we now generalize our basic lemmas accordingly.

%

\begin{definition}\label{vmapII}
Let $X$ be a compact Hausdorff space and $n,m,k \in \mathbb{N}$.  Let $p_i$ be a given finite set of orthogonal projections $p_i\in C(X)\tensor\compact.$ Consider a unital diagonal $*$-homomorphism
\[
\phi:  \compact \tensor  {C}(X) \to  \compact \tensor\compact \tensor  {C}(X^{\times m})
\]
where the entries on the diagonal are either of the form $ f\mapsto p_i\tensor f(y)$ for a point $y\in X$ \emph{or}
are of the form $f\mapsto p_i \tensor f\compose\pi_i$ where $\pi_i$ is one of the canonical projection maps $\pi_i\colon X^{\times m}\to X.$
This map will be called a \emph{Villadsen connecting map with coefficients}, or simply a connecting map if no ambiguity can arise.
\end{definition}

Let us  consider how base space homeomorphisms interact with the Villadsen connecting maps when there are coefficients, as in \cref{vmapII}.
\begin{lemma}\label{conjugation.lemmaII}
Suppose that \(
\phi\colon \compact \tensor  {C}(X) \to  \compact \tensor\compact \tensor  {C}(X^{\times m})
\) is a Villadsen connecting map with evaluation points $(y_i)$  and coefficient projections $p_i.$ Let $h\colon X\to X$ be a given self-homeomorphism of $X.$ Let us denote the pullback action of $h$ on  $ {M}_n \otimes  {C}(X)$ by $\hat h.$ Then
$$   \left({\underset m \oplus  \hat h\inv}\right) \compose \phi \compose \hat h $$
is a  Villadsen connecting map with evaluation points $(h(y_i))$ and coefficient projections $\hat h \compose p_i.$ If the given homeomorphism $h$ happens to be connected to the identity, then $\hat h \compose p_i$ is implemented by applying an inner automorphism to $p_i.$
\end{lemma}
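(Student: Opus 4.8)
The plan is to follow the proof of \cref{conjugation.lemma} essentially verbatim for the part of the statement concerning the base space, and then to supply the additional bookkeeping forced on us by the coefficient projections. As before, the conjugated map $\left({\underset m \oplus \hat h\inv}\right)\compose\phi\compose\hat h$ is again diagonal, and each diagonal entry of $\phi$ falls into one of the two types from \cref{vmapII}. I would treat the two cases separately, exactly as in the proof of the basic lemma, and then track what the outer and inner applications of $\hat h$ do to the coefficient data.

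In the evaluation case, the entry first sends the function part through $\hat h$, so that $f$ is replaced by $f\compose h$, and then evaluates at $y_i$, producing the value $f(h(y_i))$; since the result is constant in the base-space variable, the subsequent (amplified) application of $\hat h\inv$ leaves it untouched, and the evaluation point has become $h(y_i)$. In the projection-composition case, $\hat h$ replaces $f$ by $f\compose h$, the entry composes with $\pi_i$, and the amplified $\hat h\inv$ then pulls the output back by $h\inv$, returning an entry of the form $f\compose\pi_i$. In either case the net effect on the coefficient is that the pullback $\hat h$ has been applied to $p_i$, so the coefficient projections of the conjugated map are exactly $\hat h\compose p_i$. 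This establishes that the conjugated map is again a Villadsen connecting map with coefficients, carrying the stated evaluation points and coefficient projections.

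For the final claim I would invoke the hypothesis that $h$ is connected to the identity. Choosing a continuous path $h_t$ of self-homeomorphisms of $X$ with $h_0=\Id$ and $h_1=h$, the pullback gives a point-norm continuous path of automorphisms $\hat h_t$, and hence a norm-continuous path of projections $t\mapsto \hat h_t\compose p_i$ joining $p_i$ to $\hat h\compose p_i$. Since a continuous path of projections in a C*-algebra is implemented by conjugation by a unitary connected to the identity, there is a unitary $u$ with $\hat h\compose p_i=\operatorname{Ad}_u(p_i)$; that is, $\hat h\compose p_i$ is obtained from $p_i$ by an inner automorphism, as claimed.

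The step I expect to be the main obstacle is this last one: because $\compact\tensor C(X)$ is nonunital, one must be careful that the standard \emph{``homotopic projections are unitarily equivalent''} argument goes through by passing to the unitization (or the multiplier algebra) and produces a unitary that genuinely implements an inner automorphism of the relevant algebra, rather than only an abstract Murray--von Neumann equivalence. The only other point requiring care is tracking which tensor factor each copy of $\hat h\inv$ acts on in the amplified output, so as to confirm that no spurious base-space dependence is introduced into the coefficients; this is routine but should be verified explicitly.
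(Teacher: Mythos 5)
Your proposal is correct and takes essentially the same route as the paper: the first part is handled exactly as in \cref{conjugation.lemma}, and the inner-automorphism claim follows from the standard fact that a norm-continuous path of projections is implemented by unitaries (the paper cites Corollary 5.2.9 of Wegge-Olsen, whose unitary lives in the unitization, which also disposes of your nonunital worry). The only divergence is that you treat each $p_i$ separately, whereas the paper notes the cited corollary is for single projections and applies it once to the orthogonal sum, conjugating $\sum p_i$ to $\hat h\left(\sum p_i\right)$ so as to handle the whole family with a single unitary.
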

\begin{proof} The first part is proven just as in \cref{conjugation.lemma}. For the statement on inner automorphisms, recall that if two projections are linked by a homotopy (\emph{i.e.} are linked by a norm-continuous path of projections) then the homotopy is implemented by unitaries, see for example Corollary 5.2.9 in \cite{WeggeOlsen}. The Corollary is for single projections, not families, but we can as well take a finite orthogonal sum and apply the Corollary to $\sum p_i$ and $\hat h \left( \sum p_i\right).$ \end{proof}
%
%
Following this method, beginning for example with infinite dimensional projective space as the initial object, we can obtain an additional Villadsen idempotent $W_p$  in the second class of Villadsen's algebras:
\begin{theorem} There is an idempotent in the class of Villadsen algebras of the second  kind, and it is simple, separable and  nuclear.\end{theorem}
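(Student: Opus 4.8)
The plan is to mirror the construction of the first-kind Villadsen idempotent from \cref{th:VI1}, but now working within the second-kind framework of \cref{vmapII} where the connecting maps carry coefficient projections. As the suggested initial object, take $A_1 := \compact\tensor C(\CP\infty)$ (or, to match the repeated-squaring scheme, a matrix-stabilized version thereof), using infinite-dimensional projective space as the base space. The essential idea is exactly as before: form a sequential inductive limit in which each object is the tensor square of its predecessor, and each connecting map is, up to a small perturbation, the doubling of the previous connecting map with coefficients. The repeated-squaring mechanism of analysis guarantees that, when the process converges, the limit is idempotent under the tensor product.

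First I would verify that $\CP\infty$ carries the $\omega$-homogeneity property so that the basepoint-independence machinery applies. Modelling $\CP\infty$ as $S^\infty/S^1$, or directly via the projective unitary group $\PU$ of separable Hilbert space, one has $\CP\infty\times\CP\infty\isom\CP\infty$ and the $\omega$-homogeneity follows from \cref{thm:CDH} in the same manner as for the Hilbert cube and $S^\infty$; I would cite this as the analogue of the basepoint lemmas. Next, I would choose a sequence of self-homeomorphisms $h_n\colon\CP\infty\to\CP\infty$ whose orbit of a fixed evaluation point $y$ is dense (ensuring simplicity via \cref{conjugation.lemmaII}) and which satisfy $\dist(h_n,h_{n+1})<\tfrac1n$ (ensuring convergence of the repeated squaring). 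The key new ingredient supplied by \cref{conjugation.lemmaII} is that conjugating by $h_n$ not only relocates the evaluation points to $(h_n(y))$ but also transports the coefficient projections $p_i$ to $\hat h_n\compose p_i$; since $\CP\infty$ is contractible when realized through the projective unitary group, these homeomorphisms are connected to the identity, so by the final sentence of \cref{conjugation.lemmaII} the transported coefficient projections differ from the originals only by an inner automorphism. This is precisely what makes the asymptotic-commutativity argument of diagram \cref{VandVVindlim:eqn} go through unchanged.

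I would then assemble the inductive limit $V_p$ exactly as in \cref{Ind.lim.def.of.V2}, with the connecting maps now of the form $\left(\underset{2^i}\oplus\hat h_i\inv\right)\compose\phi_{\{y\}}\compose\hat h_i$ in the coefficient setting, and compare the limit decomposition of $V_p$ with that of $V_p\tensor V_p$ via the two-row diagram, whose vertical arrows are the natural isomorphisms $A_i\tensor A_i\isom A_{i+1}$. Simplicity follows from density of the evaluation-point orbit, and nuclearity and separability are automatic for these inductive limits of nuclear separable building blocks. The resulting isomorphism $V_p\isom V_p\tensor V_p$ then gives the desired idempotent.

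The main obstacle I anticipate is controlling the coefficient projections during the perturbation-and-squaring step: unlike the first-kind case, where conjugation by a basepoint homeomorphism acts trivially on the constant-function entries, here the projections $p_i$ are genuinely moved to $\hat h\compose p_i$, and one must ensure that the unitaries implementing the homotopy $p_i\rightsquigarrow\hat h\compose p_i$ (guaranteed by \cref{conjugation.lemmaII}) can be chosen consistently across all building blocks and small enough that the doubled connecting maps remain asymptotically commutative in the sense needed for diagram \cref{VandVVindlim:eqn}. This is the analogue of the consistency requirement on the matrix-flip homotopies in the proof of \cref{th:Vp.has.homotopic.flip}; it should be manageable because the relevant automorphisms are inner and the connecting maps respect the tensor product, but it is the one place where genuine care is required rather than a routine transcription of the first-kind argument.
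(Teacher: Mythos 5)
Your overall construction coincides with the paper's proof: the same initial object $A_1=\compact\tensor C(\CP{\infty})$ with the tautological line bundle as coefficient, the same repeated-squaring inductive limit with conjugated connecting maps, simplicity from the dense orbit of the evaluation point, and idempotence from the Cauchy condition via the two-row comparison diagram \eqref{VandVVindlim:eqn}. However, your justification of the one genuinely new point --- that the chosen homeomorphisms $h_n$ are connected to the identity, so that by the final clause of \cref{conjugation.lemmaII} the transported coefficient projections $\hat h_n\compose p_i$ are inner-conjugate to the $p_i$ --- rests on a false claim: $\CP{\infty}$ is \emph{not} contractible. Realizing it through the projective unitary group does not help: Kuiper's theorem makes the unitary group $U(\H)$ contractible, but the quotient $U(\H)/S^1$ is a $K(\zahlen,2)$, which is precisely the homotopy type of $\CP{\infty}$; indeed $\pi_2(\CP{\infty})=\zahlen$, and the tautological line bundle --- which enters the connecting maps exactly because it is nontrivial --- would be trivial over a contractible base. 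In particular, not every self-homeomorphism of $\CP{\infty}$ is connected to the identity (complex conjugation acts by $-1$ on $\pi_2$). The paper handles this point differently and correctly: since $\CP{\infty}$ has a single nontrivial homotopy group, its self-maps are classified up to homotopy by their action on $\zahlen$ (compare \cite{McGibbon}), and one \emph{imposes} as a hypothesis that the $h_n$ lie in the topological component of the identity and approach the identity. Connectedness to the identity is a condition placed on the chosen sequence, not an automatic feature of $\CP{\infty}$, and with that choice your argument goes through as intended.

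A secondary error, fortunately harmless to the construction: $\CP{\infty}\times\CP{\infty}\not\isom\CP{\infty}$, since the cohomology rings $\zahlen[x]$ and $\zahlen[x,y]$ differ, so the analogue of the paper's $S^\infty$ lemma fails for $\CP{\infty}$. Nothing in the proof needs it --- the objects of the inductive limit live over the powers $(\CP{\infty})^{\times m}$, which \cref{vmapII} already accommodates --- and likewise the paper does not derive $\omega$-homogeneity of $\CP{\infty}$ from \cref{thm:CDH}; all that is required is the existence of identity-component homeomorphisms with dense orbit and $\dist(h_n,h_{n+1})<\frac1n$. Your closing paragraph, on choosing the unitaries implementing the homotopies of coefficient projections consistently across the building blocks, correctly identifies the place where the second-kind argument needs care beyond a transcription of the first-kind one, in the same spirit as the paper's appeal to the argument surrounding \eqref{VandVVindlim:eqn}.
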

\begin{proof}Let us  choose initial object $A_1 := \compact\tensor  C(\CP{\infty}),$ and let $\phi_{\{y\}}$ denote a \vcm with one evaluation point, $y\in \CP{\infty}.$ We may for example choose the coefficient vector bundle to be the tautological line bundle over $\CP{\infty}.$ Since  $\CP{\infty}$ has only one nontrivial homotopy group, that one being equal to $\zahlen$, the self-homeomorphisms are topologically classified by their action on $\zahlen,$ in other words, they may act either as 1 or as -1.  (Compare \cite{McGibbon}.) Let $h_n\colon \CP{\infty}\to\CP{\infty}$ be a sequence of homeomorphisms in the topological component of the identity, and approaching the identity, such that:
\begin{itemize}
  \item The orbit of the point $y\in\CP{\infty}$ under the $h_n$ is dense in $\CP{\infty}$
  \item The homeomorphisms  $ h_n \colon \CP{\infty}\to\CP{\infty}$ satisfy $\dist(h_n,h_{n+1}) < \frac{1}{n}.$ 
\end{itemize}
Now, the first property insures that, in view of \cref{conjugation.lemma}, the inductive limit

\begin{equation}\label{Ind.lim.def.of.V2}
W_2  \isom \protect\varinjlim_{i \to \infty}\left( A_1^{\tensor^{{2^{i-1}}}}, \underset {2^i}{\oplus}  {\hat h_{i}\inv} \compose \phi_{\{y\}} \compose {\hat h_{i} } \right)
\end{equation}

has a dense set of evaluation points $\{h_i (y)\}_{i=1}^\infty$ and is therefore simple. The second property insures the process converges, and as pointed out before, we then automatically obtain the desired idempotence property. This can be seen in detail by following through the argument from  the top of page \pageref{VandVVindlim:eqn}.
\end{proof}
\begin{remark} We remark that Villadsen's argument from \cite{Villadsen1998} goes through to show that the Villadsen idempotent $W_2$ of the second kind has stable rank $>1$ and therefore this idempotent is quite different from the previous family of idempotents $V_p$ of the first kind. The homotopic flip property is thus not to be expected, and indeed, the previous argument for homotopic flip was based on contractibility and  does not apply in the case of  $\CP{\infty}.$
\end{remark}
\section*{Acknowledgements}
We thank G. Szego for a useful comment.
\appendix
\section{Idempotents from the algebraic point of view}
It is very interesting that the UCT, and the classification program for C*-algebras depends in such a crucial way upon the existence of idempotents.
Let us present an algebraic perspective  on the classification program, inspired by the Grothedieck ring construction from modern algebraic geometry.

Given any two nuclear C*-algebras $A$ and $B$ we may form a direct sum, denoted $A\oplus B$, or a tensor product, denoted $A\otimes B.$  Under these operations, a given set of C*-algebras will generate a ring. Since $A\otimes B$ is isomorphic to $B\tensor A,$ this is a commutative ring up to isomorphism of C*-algebras.
A C*-algebra $A$ is an \emph{idempotent} if $A\tensor A \isom A.$ Each such idempotent generates a ring ideal, and such ideals are of great help in understanding the structure of an abstract ring. Indeed the classification program for C*-algebras has for the most part proceeded by focusing on one such ideal and then finding invariants that classify the algebras within each such ideal. Thus it appears that the structure of this abstract ring has loosely guided the classification program for C*-algebras \cite{PrimesInClassification}. The two best known C*-algebraic  idempotents are probably $\compact$, the compact operators, and $\Z,$ the Jiang-Su algebra.    From a historical point of view, $\compact$, the compact operators, is a very important idempotent, one of the very first operator algebras to be studied, and indeed the ideal generated, the stable nuclear C*-algebras, are of vast importance. However, certain idempotents seem more suited for the classification program than others, and at present it is believed that the so called strongly self absorbing idempotents are suitable. It has been shown  \cite{StronglySelfAbsAreZstable} that all so-called strongly self-absorbing simple idempotents have the property of absorbing $\Z,$ meaning that $W\tensor \Z \isom W$ for such unital idempotents $W.$ Kirchberg has shown that $O_2$ absorbs all the known unital simple separable and nuclear idempotents \cite{KirchbergUnpublished}, meaning that for such idempotents $W\tensor O_2\isom O_2.$ 	In some cases, though, tensoring two idempotents simply gives a new idempotent, as is the case with nearly every tensor product with $\compact,$ for example. The complex numbers, $\C,$ regarded as a C*-algebra, certainly are an idempotent, and also provide a multiplicative unit for a tensor product ring.
Eventually, in the classification program, it was observed that in a given classifiable class of C*-algebras, such as, say, the AF algebras, the projectionless simple algebras, or the purely infinite simple algebras, it seems interesting to single out the algebras having the same invariants as the complex numbers, $\C.$ In most cases, such algebras turned out to be self-absorbing, i.e. idempotent in the sense that, for example, $\Z\tensor \Z$ is isomorphic to $\Z.$ In fact, these algebras have a stronger property of being strongly self-absorbing.
	The list of special idempotents that have this kind of connection with the classification program is: $\Z$, the Jiang-Su algebra;
 $O_2$ and $O_\infty,$ the Cuntz algebras; UHF algebras with all primes of infinite multiplicity; and of course $\C$ the algebra of complex numbers. The nonunital Jacelon--Razak algebra, $J,$ is not strongly self-absorbing, but has received considerable attention. (See \cite{Nawata} and the references there). An idempotent need not be simple as a C*-algebra. An especially interesting example of an idempotent that is not simple as a C*-algebra is $C([0,1])^\infty,$ in other words, $C(\U)$ where $\U$ is the Hilbert cube ({\textit{i.e. }}Tychonoff cube) of weight $\aleph_0.$
\emph{} We remark that in the category of C*-algebras the answer to Ulam's problem is negative: in other words, it is not true that $A\tensor A \isom B\tensor B$ implies that $A$ is isomorphic to $B.$ There is a somewhat involved counterexample in the unital and commutative case already \cite{NegativeUlam}.
	
	The following Hasse diagram summarizes the known relationships between simple idempotents:
	\begin{center}
\begin{tikzpicture}[yscale=0.5,xscale=1.5]
  \node (Oinf) at (-3.5,2) {$O_\infty$};
  \node (Otwo) at (-2,4) {$O_2$};
  \node (Ainf) at (-2,2) {\textcolor{gray}{simple$\cap A_\infty$}}; \node (inclusion) at (-0.9,2) {$\supset$};
  \node (Uinf) at (0,2) {$UHF_\infty$};
  \node (J) at (2,2) {$J$};
  \node (Z) at (0,-1.5) {$\Z$};
	\node (C) at (0,-3.5) {\textcolor{gray}{$\C$}};
	\node (K) at (2,-1) {\textcolor{gray}{$K$}};
  \draw (Oinf) -- (Otwo); \draw (C) -- (K); \draw (Z) -- (C); \draw (Z) to (Oinf);
	\draw (Z) -- (Otwo); \draw [gray] (Z) to (Ainf); \draw (Z) to (Uinf); \draw (Z) to (J);
	\draw (Otwo) to (Ainf); \draw (Otwo) to (Uinf);
\end{tikzpicture}\end{center}
The ideals that have been classified are shown in black, the others in grey.
The Villadsen algebras are conjectured classifiable by sufficiently many invariants, however there is no already known idempotent associated with them.

\end{document}